\documentclass[12pt]{article}
\usepackage{amsthm,amssymb,amsmath,graphicx,cite}
\usepackage{algorithmic, algorithm}
\usepackage[margin = 1in]{geometry}

\newcommand{\LHS}{\mathsf{LHS}}
\newcommand{\RHS}{\mathsf{RHS}}

\newcommand{\Oh}{{\mathcal O}}
\newcommand{\dmin}{\displaystyle\min}

\newcommand{\ip}[2]{\left\langle #1 , #2 \right\rangle}    % inner product
\newcommand{\R}{{\mathbb R}}

\newcommand{\prox}{{\text{\sf Prox}}}

\DeclareMathOperator*{\argmin}{arg\,min}

\DeclareMathOperator{\dist}{dist}

%\text{R}}

\newtheorem{theorem}{Theorem}
\newtheorem{proposition}{Proposition}

    % matrix 

\title{Convergence rates of proximal gradient methods via the convex conjugate}
\author{
David H. Gutman\thanks{Department of Mathematical Sciences, 
Carnegie Mellon University, USA, {\tt dgutman@andrew.cmu.edu}}
 \and
Javier F. Pe\~na\thanks{Tepper School of Business,
Carnegie Mellon University, USA, {\tt jfp@andrew.cmu.edu}}
}

\begin{document}

\maketitle

\begin{abstract}
We give a novel proof of the $\Oh(1/k)$ and $\Oh(1/k^2)$ convergence rates of the proximal gradient and accelerated proximal gradient methods for composite convex minimization.  
The crux of the new proof is an upper bound constructed via the convex conjugate of the objective function.  
\end{abstract}

%\newpage

\section{Introduction}
\label{sec.intro}

The development of accelerated versions of first-order methods has had a profound influence in convex optimization.  In his seminal paper~\cite{Nest83} Nesterov devised a first-order algorithm with optimal $\Oh(1/k^2)$ rate of convergence for unconstrained convex optimization 
via a modification of the standard gradient descent algorithm that includes {\em momentum} steps.  A later breakthrough was the acceleration of the {\em proximal gradient method} independently developed by Beck and Teboulle~\cite{BeckT09} and by~Nesterov~\cite{Nest13}.  
The proximal gradient method, also known as the forward-backward method~\cite{LionM79}, is an extension of the gradient descent method to solve the composite minimization problem
\begin{equation}\label{eq.problem.composite}
\min_{x\in \R^n} \varphi(x) + \psi(x)
\end{equation}
where $\varphi:\R^n\rightarrow \R$ is differentiable and $\psi: \R^n\rightarrow \R\cup\{\infty\}$ is a closed convex function such that for $t > 0$ the proximal map
\begin{equation}\label{eq.prox.map}
\prox_{t}(x):= \argmin_{y\in \R^n}\left\{\psi(y) + \frac{1}{2t}\|x-y\|^2\right\}
\end{equation}
is computable.

The significance of Nesterov's and Beck and Teboulle's breakthroughs has prompted interest in new approaches to explain how acceleration is achieved in first-order methods~\cite{AlleO14,BubeLS15,DrusFR16,FlamB15,LessRP16,Pena17,SuBC14}.  Some of these approaches are based on geometric~\cite{BubeLS15,DrusFR16}, control~\cite{LessRP16}, and differential equations~\cite{SuBC14} techniques.   The recent article~\cite{Pena17} 
relies on the convex conjugate  to give  a unified and succinct derivation of the $\Oh(1/\sqrt{k}), \Oh(1/k),$ and $\Oh(1/k^2)$ convergence rates of the subgradient, gradient, and accelerated gradient methods for unconstrained smooth convex minimization.  The crux of the approach in~\cite{Pena17} is a generic upper bound on the iterates generated by the subgradient, gradient, and accelerated gradient algorithms constructed via the convex conjugate of the objective function.

We extend the main construction in~\cite{Pena17} to give a unified derivation of the convergence rates of the proximal gradient and accelerated proximal gradient algorithms for the composite convex minimization problem \eqref{eq.problem.composite}.  As in~\cite{Pena17}, the central result of this paper (Theorem~\ref{main.thm}) is an upper bound on the iterates 
generated by both the non-accelerated and the accelerated proximal gradient methods.  This bound is constructed via the convex conjugate of the objective function.  Theorem~\ref{main.thm} readily yields the widely known $\Oh(1/k)$ and $\Oh(1/k^2)$ convergence rates of the proximal gradient and accelerated proximal gradient algorithms for \eqref{eq.problem.composite} when the smooth component $\varphi$ has Lipschitz gradient and the step sizes are chosen judiciously.  Theorem~\ref{main.thm} highlights some key similarities and differences between the non-accelerated and the accelerated algorithms.  It is noteworthy that Theorem~\ref{main.thm} and its variant, Theorem~\ref{thm.two}, hold under certain conditions on the step sizes and momentum used in the algorithm but do not require any Lipschitz assumption.  The convex conjugate approach underlying Theorem~\ref{main.thm} also extends to a {\em proximal subgradient algorithm} when the component $\varphi$ is merely convex but not necessarily smooth.  (See Algorithm~\ref{algo.subgrad} and Proposition~\ref{prop.subgrad}.)  This extension automatically yields a novel derivation of both classical~\cite[Theorem 3.2.2]{Nest04} as well as modern convergence rates~\cite[Theorem 5]{Grim17} for the projected subgradient algorithm.  The latter derivations are similar to the derivation of the convergence rates for the proximal gradient and accelerated proximal gradient algorithms.

\medskip

Throughout the paper we assume that $\R^n$ is endowed with an inner product $\ip{\cdot}{\cdot}$ and that $\|\cdot\|$ denotes the corresponding Euclidean norm.

\section{Proximal gradient and accelerated proximal gradient methods}
\label{sec.main}
%Throughout the sequel let
Let $\varphi:\R^n \rightarrow \R$ be a differentiable convex function and $\psi:\R^n \rightarrow \R\cup\{\infty\}$ be a closed convex function such that the proximal map~\eqref{eq.prox.map} is computable.  Let $f := \varphi + \psi$ and consider the problem~\eqref{eq.problem.composite} that can be rewritten as
\begin{equation}\label{eq.problem}
\dmin_{x\in \R^n}  f(x). 
\end{equation} 
Algorithm~\ref{algo.prox} describes a template of a proximal gradient algorithm for \eqref{eq.problem}.  

\begin{algorithm}
\caption{Template for proximal gradient method}\label{algo.prox}
\begin{algorithmic}[1]
	\STATE {\bf input:}  $x_0 \in \R^n$
	\STATE $y_0 := x_0; \; \theta_0 := 1$ 
	\FOR{$k=0,1,2,\dots$}
		\STATE pick $t_k > 0$ 
		\STATE $x_{k+1}:=\prox_{t_k}(y_k-t_k \nabla \varphi(y_k))$
		\STATE pick $\theta_{k+1} \in (0,1]$
		\STATE $y_{k+1}:=x_{k+1} + \frac{\theta_{k+1}(1-\theta_k)}{\theta_k} (x_{k+1}-x_k)$
	\ENDFOR
\end{algorithmic}
\end{algorithm}

Step 7 of Algorithm~\ref{algo.prox} incorporates a momentum step.
The  (non-accelerated) proximal gradient method is obtained by choosing $\theta_{k+1} = 1$ in Step 6.  In this case Step 7 simply sets $y_{k+1} = x_{k+1}$ and  does not incorporate any momentum.  Other choices of $\theta_{k+1}\in (0,1]$ yield accelerated versions of the proximal gradient method.  In particular, the FISTA algorithm in~\cite{BeckT09} is obtained by choosing $\theta_{k+1} \in (0,1]$ via the rule
$
\theta_{k+1}^2 = \theta_k^2 (1-\theta_{k+1}).
$ In this case $\theta_k \in (0,1)$ for $k \ge 1$ and there is a non-trivial momentum term in Step 7.

The main result in this paper is Theorem~\ref{main.thm} below which subsumes the widely known convergence rates $\Oh(1/k)$ and $\Oh(1/k^2)$ of the proximal gradient and accelerated proximal gradient algorithms under suitable choices of $t_k, \theta_k, \; k=0,1,\dots$.    %Then for all $x,y\in \R^n$
%\begin{equation}\label{eq.Lip}
%\varphi(y) \le f(x) + \ip{\nabla \varphi(x)}{y-x} + \frac{L}{2}\|y-x\|^2.\end{equation}   

%Theorem~\ref{main.gral} states a more general result with minimal assumptions on the choice of the parameters $t_k,\theta_k$.

%The statement of 
Theorem~\ref{main.thm} relies on a suitable constructed sequence $z_k \in \R^n, \; k=1,2,\dots.$ %, see~\eqref{eq.construction} below.  
The  construction of $z_k \in \R^n, \; k=1,2,\dots$ in turn is motivated by the identity~\eqref{eq.kstep.crux} below.  

Consider Step 5 in Algorithm~\ref{algo.prox}, namely
\begin{equation}\label{eq.prox.step}
x_{k+1} = \prox_{t_k}(y_k - t_k \nabla \varphi(y_k)).
\end{equation}
The optimality conditions for~\eqref{eq.prox.step} imply that 
\[
x_{k+1} = y_k - t_k \cdot g_k 
\]
where $g_k := g^\varphi_k + g^\psi_k$ for $g^\varphi_k:=\nabla \varphi(y_k)$ and for some $g^\psi_k \in \partial \psi(x_{k+1}).$  

%The construction~\eqref{eq.construction} is motivated by the following properties of the iterates generated by Algorithm~\ref{algo.prox}.  
Step 5 and Step 7 of Algorithm~\ref{algo.prox} imply that for $k=0,1,\dots$
\[
\frac{y_{k+1} - (1-\theta_{k+1})x_{k+1}}{\theta_{k+1}} = \frac{x_{k+1} - (1-\theta_{k})x_{k}}{\theta_{k}} = \frac{y_{k} - (1-\theta_{k})x_{k}}{\theta_{k}} - \frac{t_k}{\theta_k}g_k.
\]
Since $\theta_0 = 1$ and $y_0=x_0$, it follows that for $k=1,2,\dots$ 
\begin{equation}\label{eq.kstep.crux}
\frac{y_{k} - (1-\theta_k)x_{k}}{\theta_k} = x_0 - \sum_{i=0}^{k-1} 
\frac{t_i}{\theta_i}g_i
\Leftrightarrow
(1-\theta_k)(y_{k} - x_{k}) = \theta_{k}\left(x_0 - y_k - \sum_{i=0}^{k-1} 
\frac{t_i}{\theta_i}g_i\right).
\end{equation}

As it is customary, we will assume that the step sizes $t_k$ chosen at  Step 4 in Algorithm~\ref{algo.prox} satisfy the following decrease condition
\begin{align}\label{eq.decrease}
f(x_{k+1}) &\le \min_{x\in \R^n}\left\{\varphi(y_k) + \ip{\nabla \varphi(y_k)}{x-y_k}+  \frac{1}{2t_k}\|x-y_k\|^2 + \psi(x)\right\} \notag\\
&= 
\varphi(y_k) + \psi(x_{k+1})+ \ip{g^\psi_k}{y_k-x_{k+1}} - \frac{t_k}{2} \|g_k\|^2.
\end{align}   
The condition~\eqref{eq.decrease} holds in particular when  $\nabla \varphi $  is Lipschitz and $t_k, \; k=0,1,\dots$ are chosen via a standard  backtracking procedure.
% such as the ones described in~\cite{BeckT09,Nest13}.  
Observe that \eqref{eq.decrease} implies $f(x_{k+1}) \le f(y_k)$.

Theorem~\ref{main.thm} also relies on the convex conjugate function.  Recall that if $h:\R^n \rightarrow \R\cup\{\infty\}$ is a convex function then its {\em convex conjugate} $h^*:\R^n\rightarrow \R\cup \{\infty\}$ is defined as
\[
h^*(z) = \sup_{x\in \R^n} \left\{\ip{z}{x} - h(x)\right\}.
\]

\begin{theorem}\label{main.thm} 
%Suppose $\nabla \varphi$ is $L$-Lipschitz.  
Suppose $\theta_k \in (0,1], \; k=0,1,2,\dots$ and the step sizes $t_k>0,\; k=0,1,2,\dots$ are such that~\eqref{eq.decrease} holds.
Let $x_k \in \R^n, \; k=1,2,\dots$ be the iterates generated by Algorithm~\ref{algo.prox}.  Let $z_k\in \R^n, \; k = 1,2\dots$ be as follows
\begin{equation}\label{eq.construction}
z_k := \frac{\displaystyle\sum_{i=0}^{k-1} \frac{t_i}{\theta_i} g_i}{\displaystyle\sum_{i=0}^{k-1} \frac{t_i}{\theta_i}}. 
\end{equation}
Then
\begin{equation}\label{eq.thm.grad}
\LHS_k  \le  -f^*(z_k)+ \ip{z_k}{x_0} - 
\frac{\sum_{i=0}^{k-1} \frac{t_i}{\theta_i}}{2} \|z_k\|^2,
\end{equation}
where $\LHS_k$ is as follows depending on the choice of $\theta_k \in (0,1]$ and $t_k>0$.
\begin{itemize}
\item[(a)] When $\theta_k = 1, k=0,1,\dots$ let
\[
\LHS_k := \frac{\sum_{i=0}^{k}t_i f(x_{i+1})}{\sum_{i=0}^{k}t_i}.
\]
\item[(b)] When $t_k>0$ and $\theta_k \in (0,1], \; k=0,1,\dots$ are such that $\sum_{i=0}^{k-1} \frac{t_i}{\theta_i} = (1-\theta_k)\sum_{i=0}^{k} \frac{t_i}{\theta_i}$  let
\[
\LHS_k = f(x_k).
\]
\end{itemize}
\end{theorem}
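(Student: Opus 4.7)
\textbf{Plan for the proof of Theorem~\ref{main.thm}.}

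The starting point is a \emph{single-step} inequality that I would derive by combining the descent condition~\eqref{eq.decrease} with the convexity of $\varphi$ and $\psi$. Specifically, the subgradient inequalities $\varphi(y_i) \le \varphi(x) + \ip{g_i^\varphi}{y_i-x}$ and $\psi(x_{i+1}) \le \psi(x) + \ip{g_i^\psi}{x_{i+1}-x}$, substituted into the right-hand side of~\eqref{eq.decrease}, collapse to
\[
f(x_{i+1}) \;\le\; f(x) + \ip{g_i}{y_i - x} - \frac{t_i}{2}\|g_i\|^2 \qquad \text{for every } x \in \R^n.
\]
This is the workhorse identity; everything downstream is a suitable averaging of it.

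For brevity write $s_i := t_i/\theta_i$, $T_k := \sum_{i=0}^{k-1} s_i$, and $w_i := x_0 - \sum_{j=0}^{i-1} s_j g_j$. By~\eqref{eq.kstep.crux} one has $w_i = (y_i-(1-\theta_i)x_i)/\theta_i$, hence $y_i - (1-\theta_i)x_i - \theta_i x = \theta_i(w_i - x)$. For \textbf{case (a)} I would apply the workhorse inequality with weight $t_i$ and sum over $i = 0,\dots,k-1$; since $\theta_i=1$ forces $y_i = x_i = w_i$, the middle term is $\sum_i t_i \ip{g_i}{w_i - x}$. For \textbf{case (b)} I would apply the workhorse inequality twice, once at the generic $x$ and once at $x_i$, and combine with weights $\theta_i$ and $1-\theta_i$ to get
\[
[f(x_{i+1}) - f(x)] - (1-\theta_i)[f(x_i)-f(x)] \;\le\; \theta_i \ip{g_i}{w_i - x} - \frac{t_i}{2}\|g_i\|^2.
\]
Multiplying through by $T_{i+1}$ and invoking the hypothesis $T_{i+1}(1-\theta_i) = T_i$ (equivalently $T_{i+1}\theta_i = s_i$ and $T_{i+1}t_i = s_i^2$) turns the left-hand side into a telescoping difference $T_{i+1}[f(x_{i+1})-f(x)] - T_i[f(x_i)-f(x)]$, whose sum over $i=0,\dots,k-1$ collapses to $T_k[f(x_k)-f(x)]$ (using $T_0 = 0$).

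In both cases the bookkeeping produces the same template
\[
T_k\bigl[\LHS_k - f(x)\bigr] \;\le\; \sum_{i=0}^{k-1} s_i\,\ip{g_i}{w_i - x} \;-\; \sum_{i=0}^{k-1} \frac{s_i^2}{2}\|g_i\|^2,
\]
where in case (a) the coefficient $s_i^2 = t_i^2$ matches the weight $t_i^2/2$ that appears after multiplying by $t_i$. To simplify the right-hand side I would use the telescoping algebraic identity obtained from expanding $\|\sum_{j\le i} s_j g_j\|^2 - \|\sum_{j<i} s_j g_j\|^2$, namely
\[
\sum_{i=0}^{k-1} s_i \ip{g_i}{w_i} = T_k\ip{z_k}{x_0} - \frac{T_k^2}{2}\|z_k\|^2 + \frac{1}{2}\sum_{i=0}^{k-1} s_i^2\|g_i\|^2,
\]
together with $\sum_i s_i g_i = T_k z_k$. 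The $s_i^2\|g_i\|^2$ terms cancel, leaving
\[
T_k\bigl[\LHS_k - f(x) + \ip{z_k}{x}\bigr] \;\le\; T_k\ip{z_k}{x_0} - \frac{T_k^2}{2}\|z_k\|^2.
\]

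Finally I would divide by $T_k$ and take the infimum over $x \in \R^n$, recognizing $\inf_x\{f(x) - \ip{z_k}{x}\} = -f^*(z_k)$, to arrive at~\eqref{eq.thm.grad}. The principal obstacle is the identification, in case~(b), of the correct telescoping multiplier $T_{i+1}$ together with the two algebraic equivalents $T_{i+1}\theta_i = s_i$ and $T_{i+1}t_i = s_i^2$ afforded by the hypothesis; without this precise match, neither the telescoping of the $f$-values nor the cancellation of the $s_i^2\|g_i\|^2$ remainder terms succeeds, and the quadratic correction $\tfrac{T_k}{2}\|z_k\|^2$ fails to emerge cleanly.
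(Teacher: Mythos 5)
Your proposal is correct, but it takes a genuinely different route from the paper's. You derive the per-iteration inequality $f(x_{i+1}) \le f(x) + \ip{g_i}{y_i - x} - \tfrac{t_i}{2}\|g_i\|^2$ for every $x$ (which does follow from~\eqref{eq.decrease} plus the two subgradient inequalities, with the $\ip{g_i^\psi}{\cdot}$ terms collapsing as you indicate), telescope it with the multipliers $T_{i+1}$ using the identities $T_{i+1}\theta_i = s_i$ and $T_{i+1}t_i = s_i^2$, complete the square on $\sum_i s_i\ip{g_i}{w_i}$ so that the $\tfrac{1}{2}s_i^2\|g_i\|^2$ remainders cancel exactly, and only at the very end invoke $\inf_x\{f(x)-\ip{z_k}{x}\} = -f^*(z_k)$. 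All of these steps check out, so this is a complete argument in the classical weighted-gap/estimate-sequence style of Beck--Teboulle, with the conjugate entering only as a final reinterpretation of an infimum over $x$. The paper instead proves~\eqref{eq.thm.grad} by induction on $k$ and keeps the conjugate in play at every step: it writes $z_{k+1}=(1-\gamma_k)z_k+\gamma_k g_k$, $\mu_{k+1}=(1-\gamma_k)\mu_k$, uses convexity of $f^*$, the subadditivity $f^*(z^\varphi+z^\psi)\le \varphi^*(z^\varphi)+\psi^*(z^\psi)$, and the equality case of Fenchel--Young at $g_k^\varphi=\nabla\varphi(y_k)$ and $g_k^\psi\in\partial\psi(x_{k+1})$, then matches $\RHS_{k+1}-(1-\gamma_k)\RHS_k$ against $\LHS_{k+1}-(1-\gamma_k)\LHS_k$. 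Your version is arguably more elementary and makes the global cancellation of the $\|g_i\|^2$ terms transparent; the paper's conjugate-threaded induction is the methodological point of the article and is what carries over directly to Theorem~\ref{thm.two}, where the rescaled conjugate $(R_k\cdot(f-\bar f))^*$ must be propagated through the induction --- a feature your one-shot summation would need nontrivial reworking to reproduce. One minor remark, inherited from the paper rather than a gap in your argument: in case (a) your derivation, like the paper's own induction (whose base case is $\LHS_1=f(x_1)$), bounds the average of $t_i f(x_{i+1})$ over $i=0,\dots,k-1$, whereas the theorem as printed sums to $k$; this is an indexing slip in the statement.
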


\medskip

Theorem~\ref{main.thm} readily implies that in both case (a) and case (b)
\begin{align*}
\LHS_k  &\le  \min_{u\in \R^n} \left\{f(u) - \ip{z_k}{u}\right\} 
 + \min_{u\in\R^n}\left\{\ip{z_k}{u} + \frac{1}{2\cdot \sum_{i=0}^{k-1} \frac{t_i}{\theta_i}}\|u-x_0\|^2  \right\} \\
 & \le \min_{u\in \R^n} \left\{ f(u) + \frac{1}{2\cdot \sum_{i=0}^{k-1} \frac{t_i}{\theta_i}}\|u-x_0\|^2  \right\} \\
 &\le f(x) +  \frac{1}{2\cdot \sum_{i=0}^{k-1} \frac{t_i}{\theta_i}}\|x-x_0\|^2
\end{align*}
for all $x\in \R^n$.  

Let $\bar f$ and $\bar X$ respectively denote the optimal value and set of optimal solutions to \eqref{eq.problem}.  If $\bar f$ is finite and $\bar X$ is nonempty then in both case (a) and case (b) of Theorem~\ref{main.thm} we  get
\begin{equation}\label{eq.ineq.proximal}
f(x_k) - \bar f \le \frac{\dist(x_0,\bar X)^2}{2\cdot \sum_{i=0}^{k-1} \frac{t_i}{\theta_i}}.
\end{equation}
Suppose  $t_k \ge \frac{1}{L}, \; k=0,1,2,\dots$ for some constant $L>0$.  This holds in particular if $\nabla \varphi$ is Lipschitz and $t_k$ is chosen via a standard backtracking procedure.  Then inequality \eqref{eq.ineq.proximal} yields the following known convergence bound for the proximal gradient method
\[
f(x_k) - \bar f \le \frac{L\cdot\dist(x_0,\bar X)^2}{2k}.
\]
On the other hand, suppose $t_k = \frac{1}{L}, \; k=0,1,2,\dots$ for some constant $L>0$ and $\theta_k, \; k=0,1,2,\dots$ are chosen via $\theta_0 = 1$ and $\theta_{k+1}^2 = \theta_k^2(1-\theta_{k+1}).$  Then a straightforward induction shows that 
$$\sum_{i=0}^{k-1}\frac{t_i}{\theta_i}  =(1-\theta_k) \sum_{i=0}^{k}\frac{t_i}{\theta_i} =\frac{1}{L \theta_{k-1}^2} 
%= \frac{1-\theta_k}{L \theta_{k}^2} 
\ge \frac{(k+1)^2}{4L}.$$
Thus case (b) in Theorem~\ref{main.thm} applies and inequality \eqref{eq.ineq.proximal} yields the following known convergence bound for the accelerated proximal gradient method
\[
f(x_k) - \bar f \le \frac{2L\cdot\dist(x_0,\bar X)^2}{(k+1)^2}.
\]
Although Theorem~\ref{main.thm} yields the iconic $\Oh(1/k^2)$ convergence rate of the accelerated proximal gradient algorithm, it applies under the somewhat restrictive conditions stated in case (b) above.  In particular, case (b) does not cover the more general case when $t_k, \; k=0,1,\dots$ are chosen via backtracking as in the FISTA with backtracking algorithm in~\cite{BeckT09}.  
The convergence rate in this case, namely~\cite[Theorem 4.4]{BeckT09} is a consequence of Theorem~\ref{thm.two} below.   Theorem~\ref{thm.two} is a variant of Theorem~\ref{main.thm}(b) that applies to more flexible choices of $t_k,\theta_k, \; k=0,1,\dots$.  In particular, Theorem~\ref{thm.two} applies to the popular choice $\theta_k = \frac{2}{k+2}, \; k=0,1,\dots$.
\begin{theorem}\label{thm.two} Suppose $\bar f = \dmin_{x\in\R^n} f(x)$ is finite,  $\theta_k\in(0,1], \; k=0,1,2,\dots$ satisfy $\theta_0 = 1$ and $\theta_{k+1}^2 \ge \theta_k^2(1-\theta_{k+1}),$ and the step sizes $t_k>0,\; k=0,1,2,\dots$ are non-increasing and such that~\eqref{eq.decrease} holds.  Let $x_k \in \R^n, \; k=1,2,\dots$ be the iterates generated by Algorithm~\ref{algo.prox}.  Let $z_k \in \R^n, \; k=1,2,\dots$ be as follows
\[
z_k = \frac{\theta_{k-1}^2}{t_{k-1}} \cdot \displaystyle\sum_{i=0}^{k-1} \frac{t_i}{\theta_i} g_i. 
\]
Then for $k=1,2,\dots$ 
\begin{equation}\label{eq.thm.two}
f(x_k) - \bar f  \le  -(R_k\cdot (f - \bar f))^*(z_k) + \ip{z_k}{x_0} - \frac{t_{k-1}}{2\theta_{k-1}^2}\|z_k\|^2,
\end{equation}
where $R_1 = 1$ and $R_{k+1} = \frac{t_{k-1}}{t_{k}} \cdot \frac{\theta_{k}^2}{\theta_{k-1}^2(1-\theta_k)} \cdot R_k \ge 1, \; k=1,2,\dots.$  In particular, if $\bar X = \{x\in \R^n: f(x) = \bar f\}$ is nonempty then
\[
f(x_k) - \bar f \le \min_{u\in\R^n}\left\{R_k \cdot(f(u)-\bar f) + \frac{\theta_{k-1}^2}{2t_{k-1}} \|u-x_0\|^2 \right\} =\frac{\theta_{k-1}^2\cdot \dist(x_0,\bar X)^2}{2t_{k-1}}.
\]
\end{theorem}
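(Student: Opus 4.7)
My plan is to reduce \eqref{eq.thm.two} to the pointwise statement (writing $h := f - \bar f$)
\[
h(x_k) \le R_k\, h(u) + \ip{z_k}{x_0 - u} - \frac{t_{k-1}}{2\theta_{k-1}^2}\|z_k\|^2 \quad \forall u \in \R^n,
\]
which is \eqref{eq.thm.two} after taking the infimum in $u$ and recognizing $-(R_k h)^*(z_k) = \inf_u\{R_k h(u) - \ip{z_k}{u}\}$. The concluding distance bound then follows from the Fenchel inequality $\ip{z_k}{x_0 - u} - \tfrac{t_{k-1}}{2\theta_{k-1}^2}\|z_k\|^2 \le \tfrac{\theta_{k-1}^2}{2t_{k-1}}\|x_0 - u\|^2$, optimized over $u \in \bar X$.

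The starting point, available from \eqref{eq.decrease} and convexity of $\varphi, \psi$, is
\[
f(x_{k+1}) \le f(u) + \ip{g_k}{y_k - u} - \tfrac{t_k}{2}\|g_k\|^2 \quad \forall u \in \R^n.
\]
I apply this twice (with $u = x_k$ and with general $u$), form the convex combination with weights $1-\theta_k$ and $\theta_k$, substitute $y_k - (1-\theta_k)x_k = \theta_k(x_0 - G_k)$ from \eqref{eq.kstep.crux} where $G_k := \sum_{i=0}^{k-1}(t_i/\theta_i)g_i$, and multiply through by $t_k/\theta_k^2$ to get the per-iteration inequality
\[
\tfrac{t_k}{\theta_k^2} h(x_{k+1}) - \tfrac{t_k(1-\theta_k)}{\theta_k^2} h(x_k) - \tfrac{t_k}{\theta_k} h(u) \le \tfrac{t_k}{\theta_k}\ip{g_k}{x_0 - u - G_k} - \tfrac{t_k^2}{2\theta_k^2}\|g_k\|^2.
\]

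The hypotheses $t_{k+1} \le t_k$ and $\theta_k^2 \ge \theta_{k-1}^2(1-\theta_k)$ combine to give $\tfrac{t_k(1-\theta_k)}{\theta_k^2} \le \tfrac{t_{k-1}}{\theta_{k-1}^2}$; since $h(x_k) \ge 0$, I may weaken the coefficient on $h(x_k)$ and set up a telescope. Summing from $k=0$ to $K-1$ (with $\theta_0 = 1$ annihilating the $h(x_0)$ term) yields
\[
\tfrac{t_{K-1}}{\theta_{K-1}^2} h(x_K) - h(u)\sum_{k=0}^{K-1}\tfrac{t_k}{\theta_k} \le \sum_{k=0}^{K-1}\tfrac{t_k}{\theta_k}\ip{g_k}{x_0 - u - G_k} - \sum_{k=0}^{K-1}\tfrac{t_k^2}{2\theta_k^2}\|g_k\|^2.
\]
The crucial algebraic identity $\sum_{k=0}^{K-1}\tfrac{t_k}{\theta_k}\ip{g_k}{G_k} = \tfrac{1}{2}\|G_K\|^2 - \tfrac{1}{2}\sum_{k=0}^{K-1}\tfrac{t_k^2}{\theta_k^2}\|g_k\|^2$, derived from $(t_k/\theta_k)g_k = G_{k+1}-G_k$ by expanding $\|G_{k+1}\|^2 - \|G_k\|^2$, then causes the $\|g_k\|^2$ terms to cancel, leaving the right-hand side as $\ip{G_K}{x_0 - u} - \tfrac{1}{2}\|G_K\|^2$. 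Dividing by $t_{K-1}/\theta_{K-1}^2$ and using $z_K = (\theta_{K-1}^2/t_{K-1})G_K$ delivers the target inequality, but with coefficient $\tilde R_K := (\theta_{K-1}^2/t_{K-1})\sum_{k=0}^{K-1}(t_k/\theta_k)$ in front of $h(u)$ rather than the cleaner $R_K$.

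It remains to verify $\tilde R_K \le R_K$ (so that $h(u) \ge 0$ allows inflating the coefficient) together with $R_K \ge 1$. A direct computation gives the recursion $\tilde R_{K+1} = \tfrac{t_{K-1}\theta_K^2}{t_K\theta_{K-1}^2}\tilde R_K + \theta_K$, while the paper's definition reads $R_{K+1} = \tfrac{t_{K-1}\theta_K^2}{t_K\theta_{K-1}^2(1-\theta_K)} R_K$; the extra factor $1/(1-\theta_K)$ absorbs the additive $\theta_K$ exactly when $R_K \ge 1$, and $R_{K+1} \ge 1$ follows in turn from $R_K \ge 1$ together with $t_{K-1}\theta_K^2 \ge t_K\theta_{K-1}^2(1-\theta_K)$. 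This joint induction relating $\tilde R_K$ and $R_K$ is the main technical subtlety; everything else is straightforward assembly of the ingredients behind Theorem~\ref{main.thm}(b), adapted to accommodate the looser relationship between the step sizes and the momentum parameters.
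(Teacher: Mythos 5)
Your proposal is correct, and I verified the individual steps: the descent-plus-convexity inequality $f(x_{k+1})\le f(u)+\ip{g_k}{y_k-u}-\tfrac{t_k}{2}\|g_k\|^2$, the substitution from \eqref{eq.kstep.crux}, the telescoping condition $\tfrac{t_k(1-\theta_k)}{\theta_k^2}\le\tfrac{t_{k-1}}{\theta_{k-1}^2}$ (which is exactly where the hypotheses $t_{k}\le t_{k-1}$ and $\theta_k^2\ge\theta_{k-1}^2(1-\theta_k)$ enter, with $\theta_0=1$ killing the $h(x_0)$ term), the cancellation of the $\|g_k\|^2$ terms via the expansion of $\|G_{k+1}\|^2-\|G_k\|^2$, and the joint induction showing $\tilde R_{K+1}=\tfrac{t_{K-1}\theta_K^2}{t_K\theta_{K-1}^2}\tilde R_K+\theta_K\le R_{K+1}$ precisely when $R_{K+1}\ge 1$. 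However, your route is genuinely different from the paper's. The paper proves \eqref{eq.thm.two} by a one-step induction carried out directly on the conjugate-form bound: it uses the recursion $z_{k+1}=\rho_k(1-\theta_k)z_k+\theta_k g_k$ with $\rho_k=R_{k+1}/R_k\ge 1$, the convexity of $f^*$ together with properties \eqref{eq.property.1}--\eqref{eq.property.3} to control $-(R_{k+1}f)^*(z_{k+1})$, and the nonnegativity of the inductive right-hand side to absorb the factor $\rho_k$; the per-iteration estimate is then exactly the one from case (b) of Theorem~\ref{main.thm}. You instead work entirely in the primal: you test the per-iteration inequality against an arbitrary $u$, telescope globally, and only invoke the conjugate at the very end as an infimum over $u$. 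What the paper's argument buys is uniformity with Theorem~\ref{main.thm} and with the paper's overall conjugate-function theme; what yours buys is elementarity (no conjugate calculus of $f^*$ is needed, only the definition), a transparent identification of where each hypothesis is used, and in fact a slightly sharper conclusion, since your natural coefficient $\tilde R_K=\tfrac{\theta_{K-1}^2}{t_{K-1}}\sum_{i=0}^{K-1}\tfrac{t_i}{\theta_i}$ satisfies $\tilde R_K\le R_K$ and could replace $R_K$ in the statement. (Both your proof and the paper's implicitly require $\theta_k<1$ for $k\ge 1$ so that $R_{k+1}$ is defined, but that is an artifact of the theorem's statement, not of your argument.)
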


Suppose the step sizes $t_k, \; k=0,1,2,\dots$ are non-increasing, satisfy~\eqref{eq.decrease}, and $t_k \ge \frac{1}{L}, \; k=0,1,2,\dots$ for some constant $L > 0$.  This holds in particular when
$\nabla \varphi$ is Lipschitz and $t_k$ is chosen via a suitable backtracking procedure as the one in~\cite{BeckT09}.  If $\theta_0 = 1$ and $\theta_{k+1}^2 \ge \theta_k^2(1-\theta_{k+1}),\; k=0,1,\dots$ then Theorem~\ref{thm.two} implies that
\[
f(x_k) - \bar f \le \frac{L \theta_{k-1}^2 \cdot \dist(x_0,\bar X)^2}{2}.
\]
Hence if $\theta_{k+1}^2 = \theta_k^2(1-\theta_{k+1})$ or $\theta_k = \frac{2}{k+2}$ for $k=0,1,\dots$ then
\[
f(x_k) - \bar f \le \frac{2 L  \cdot \dist(x_0,\bar X)^2}{(k+1)^2}.
\]

\section{Proof of Theorem~\ref{main.thm} and Theorem~\ref{thm.two}}
\label{sec.proof}
We will use the following properties of the convex conjugate.  

Suppose $h:\R^n \rightarrow\R\cup\{\infty\}$ is a convex function.  Then 
\begin{equation}\label{eq.property.0}
h^*(z) + h(x) \ge \ip{z}{x}
\end{equation}
for all $z,x\in \R^n,$ and equality holds if $z \in \partial h(x)$.  
 
Suppose $f,\varphi,\psi:\R^n \rightarrow\R\cup\{\infty\}$ are convex functions and $f = \varphi+\psi$.  Then 
\begin{equation}\label{eq.property.1}
 f^*(z^\varphi+z^\psi) \le \varphi^*(z^\varphi) + \psi^*(z^\psi) \; \text{ for all } z^\varphi, z^\psi \in \R^n.
\end{equation}

Suppose $f:\R^n\rightarrow \R_+\cup\{\infty\}$ is a convex function and $R\ge 1$. Then
\begin{equation}\label{eq.property.2}
(R\cdot f)^*(Rz) = R \cdot (f^*(z)),
\end{equation}
and
\begin{equation}\label{eq.property.3}
(R\cdot f)^*(z) \le  f^*(z).
\end{equation}

\subsection{Proof of Theorem~\ref{main.thm}}
We prove~\eqref{eq.thm.grad} by induction.  To ease notation, let $\mu_k:= \frac{1}{\sum_{i=0}^{k-1} \frac{t_i}{\theta_i}}$ throughout this proof.  For $k=1$ we have
\begin{align*}
\LHS_1 = f(x_1) 
&\le \varphi(x_0) + \psi(x_1) + \ip{g^\psi_0}{x_0-x_1} - \frac{t_0}{2}\|g_0\|^2  \\
  &= \varphi(x_0) - \ip{g_0^\varphi}{x_0} + \psi(x_1) -\ip{g_0^\psi}{x_1}
 + \ip{g_0}{x_0} - \frac{t_0}{2}\|g_0\|^2 \\
 & = -\varphi^*(g_0^\varphi) - \psi^*(g_0^\psi) + \ip{g_0}{x_0} - \frac{t_0}{2}\|g_0\|^2\\
 & \le -f^*(z_1) + \ip{z_1}{x_0} - \frac{\|z_1\|^2}{2\mu_1}. 
\end{align*}
The first step follows from~\eqref{eq.decrease}. The third step follows from~\eqref{eq.property.0} and $g^\varphi_0 = \nabla \varphi(x_0), \; g^\psi_0 \in \partial \psi(x_1)$. The last step follows from~\eqref{eq.property.1} and the choice of $z_1 = g_0 = g^\varphi_0+ g^\psi_0$ and $\mu_1 = \frac{1}{t_0}$.

Suppose~\eqref{eq.thm.grad} holds for $k$ and let $\gamma_k = \frac{t_k/\theta_k}{\sum_{i=0}^{k} t_i/\theta_i}$.  The construction~\eqref{eq.construction} implies that
\begin{align*}
z_{k+1} &= (1-\gamma_k) z_k + \gamma_k g_k \\
\mu_{k+1} &=(1-\gamma_k)\mu_k.
\end{align*}
Therefore,
\begin{equation}\label{eq.ind.1}
\ip{z_{k+1}}{x_0} - \frac{\|z_{k+1}\|^2}{2\mu_{k+1}} = (1-\gamma_k) \left(\ip{z_{k}}{x_0} - \frac{\|z_{k}\|^2}{2\mu_{k}}\right)
 +  \gamma_k \left(\ip{g_k}{x_0-\frac{z_k}{\mu_k}} -\frac{\gamma_k}{2(1-\gamma_k) \mu_k} \|g_k\|^2\right).
\end{equation}
In addition, the convexity of $f^*$, properties~\eqref{eq.property.0}, ~\eqref{eq.property.1}, and  $g^\varphi_k = \nabla \varphi(y_k),\; g^\psi_k \in \partial \psi(x_{k+1}), \; g_k = g^\varphi_k + g^\psi_k$ imply 
\begin{align}\label{eq.ind.2}
-f^*(z_{k+1}) &\ge -(1-\gamma_k)f^*(z_k) - \gamma_k f^*(g_k) \notag \\
&\ge -(1-\gamma_k)f^*(z_k) - \gamma_k (\varphi^*(g^\varphi_k) + \psi^*(g^\psi_k)) \\
&= -(1-\gamma_k)f^*(z_k) - \gamma_k \left(\ip{g^\varphi_k}{y_{k}} - \varphi(y_{k}) + \ip{g^\psi_k}{x_{k+1}} - \psi(x_{k+1})\right).\notag
\end{align}
Let $\RHS_k$ denote the right-hand side in~\eqref{eq.thm.grad}.  From~\eqref{eq.ind.1} and \eqref{eq.ind.2} it follows that
\begin{align}\label{eq.induction}
\RHS&_{k+1} - (1-\gamma_k)\RHS_k \\
& \ge \gamma_k \left(\ip{g_{k}}{x_0-y_k-\frac{z_{k}}{\mu_k}} + \varphi(y_k)+\psi(x_{k+1}) + \ip{g^\psi_{k}}{y_k - x_{k+1}}-\frac{\gamma_k}{2(1-\gamma_k) \mu_k} \|g_{k}\|^2\right).\notag
\end{align}
Hence to complete the proof of~\eqref{eq.thm.grad} by induction it suffices to show that
\begin{align}\label{eq.to.show}
\LHS&_{k+1} - (1-\gamma_k)\LHS_k \\
& \le \gamma_k \left(\ip{g_{k}}{x_0-y_k-\frac{z_{k}}{\mu_k}} + \varphi(y_k)+\psi(x_{k+1}) + \ip{g^\psi_{k}}{y_k - x_{k+1}}-\frac{\gamma_k}{2(1-\gamma_k) \mu_k} \|g_{k}\|^2\right).\notag
\end{align}
To that end, we consider case (a) and case (b) separately.

\medskip

\noindent
{\bf Case (a).}  In this case $\gamma_k = \frac{t_k}{\sum_{i=0}^k t_i}$ and $y_k=x_{k}.$
% = x_0 + \frac{g_0+\cdots+g_{k_1}}{L} = x_0 - \frac{z_k}{\mu_k}$. % and  recall that $y_k=x_{k}$ in this case.   
Thus $\mu_k = \frac{1}{\sum_{i=0}^{k-1} t_i},\; \frac{\gamma_k}{(1-\gamma_k)\mu_k} =t_k,$ and $x_0-y_k - \frac{z_k}{\mu_k} = 0$.  
Therefore
\begin{align*}
\LHS_{k+1} &- (1-\gamma_k)\LHS_k \\ 
&= \gamma_k \cdot f(x_{k+1}) \\
&\le \gamma_k\left(\varphi(y_{k})+ \psi(x_{k+1}) + \ip{g^\psi_k}{y_k-x_{k+1}}-\frac{t_k}{2}\|g_k\|^2\right)
\\
&= 
%\gamma_k\left(\varphi(y_{k}) + \psi(x_{k+1}) + \ip{g^\psi_{k}}{y_k - x_{k+1}}-\frac{t_k}{2}\|g_k\|^2\right)\\&= 
\gamma_k\left(\varphi(y_{k}) + \psi(x_{k+1}) + \ip{g^\psi_{k}}{y_k - x_{k+1}}- \frac{\gamma_k}{2(1-\gamma_k) \mu_k} \|g_k\|^2\right) 
\\&=\gamma_k \left(\ip{g_{k}}{x_0-y_k-\frac{z_{k}}{\mu_k}} + \varphi(y_k)+\psi(x_{k+1}) + \ip{g^\psi_{k}}{y_k - x_{k+1}}-\frac{\gamma_k}{2(1-\gamma_k) \mu_k} \|g_{k}\|^2\right).
\end{align*}
The second step follows from~\eqref{eq.decrease}. The third and fourth steps follow from $\frac{\gamma_k}{(1-\gamma_k)\mu_k} = t_k$ and $x_0-y_k - \frac{z_k}{\mu_k} = 0$ respectively.  Thus~\eqref{eq.to.show} holds in case (a).

\medskip

\noindent
{\bf Case (b).}  In this case $\gamma_k =\theta_k$ and $\frac{\gamma_k^2}{(1-\gamma_k)\mu_k} = t_k$.  %A separate induction shows that 
%\begin{equation}\label{eq.kstep}
%(1-\gamma_k)(y_k-x_k) = \gamma_k\left(x_0 - y_k - \frac{z_k}{\mu_k}\right).
%\end{equation}
%for $k=1,2,\dots.$ 
Therefore
\begin{align*}
\LHS_{k+1} &- (1-\gamma_k)\LHS_k \\ 
 &= f(x_{k+1}) - (1-\gamma_k)(\varphi(x_{k}) + \psi(x_{k})) \\
&\le 
\varphi(y_k) + \psi(x_{k+1})+ \ip{g^\psi_k}{y_k-x_{k+1}} -\frac{t_k}{2}\|g_k\|^2 \\
&\;\;\;- (1-\gamma_k)\left(\varphi(y_k) + \ip{g^\varphi_k}{x_k-y_k} + \psi(x_{k+1}) + \ip{g^\psi_k}{x_k-x_{k+1}}\right)
\\
%& = \gamma_k(\varphi(y_k) + \psi(x_{k+1})) +\ip{g^\psi_k}{y_k-x_{k+1}}
%- (1-\gamma_k)\left(\ip{g_k}{x_k-y_k}    
%+ \ip{g^\psi_k}{y_k-x_{k+1}}\right)
%\\
%&\;\;+\ip{g^\varphi_k}{x_{k+1}-y_k} - \frac{t_k}{2}\|g_k\|^2
%\\
&= \gamma_k\left(\varphi(y_k) + \psi(x_{k+1}) + \ip{g^\psi_k}{y_k-x_{k+1}}\right) 
+(1-\gamma_k) \ip{g_k}{y_k-x_k} - \frac{t_k}{2} \|g_k\|^2
\\
%&=\gamma_k \left(\ip{g_{k}}{x_0-y_k-\frac{z_{k}}{\mu_k}} + \varphi(y_k)+\psi(x_{k+1}) + \ip{g^\psi_{k}}{y_k - x_{k+1}}\right)-\frac{1}{2L} \|g_{k}\|^2\\
&=
\gamma_k \left(\ip{g_{k}}{x_0-y_k-\frac{z_{k}}{\mu_k}} + \varphi(y_k)+\psi(x_{k+1}) + \ip{g^\psi_{k}}{y_k - x_{k+1}}-\frac{\gamma_k}{2(1-\gamma_k) \mu_k} \|g_{k}\|^2\right).
\end{align*}
The second step follows from~\eqref{eq.decrease} and the convexity of $\varphi$ and $\psi$.  The last step follows from~$\theta_k = \gamma_k,$~equation~\eqref{eq.kstep.crux},  and~$\frac{\gamma_k^2}{(1-\gamma_k)\mu_k} = t_k.$  Thus~\eqref{eq.to.show} holds in case (b) as well.

\subsection{Proof of Theorem~\ref{thm.two}}

The proof of Theorem~\ref{thm.two} is a modification of the proof of Theorem~\ref{main.thm}.  Without loss of generality assume $\bar f = 0$ as otherwise we can work with $f-\bar f$ in place of $f$.  
Again we prove~\eqref{eq.thm.two} by induction. To ease notation, let $\mu_k:= \frac{\theta_{k-1}^2}{t_{k-1}}$ throughout this proof.
For $k=1$ inequality \eqref{eq.thm.two} is identical to \eqref{eq.thm.grad} since $R_1 = 1$ and $\theta_0 = 1$.  Hence this case follows from the proof of Theorem~\ref{main.thm} for $k=1$.    Suppose~\eqref{eq.thm.two} holds for $k$.  Observe that
\begin{align*}
z_{k+1} &= \rho_k (1-\theta_k)z_k + \theta_k g_k \\
\mu_{k+1} &= \rho_k (1-\theta_k)\mu_k
\end{align*}
for $\rho_k := \frac{R_{k+1}}{R_k}  = \frac{t_{k-1}}{t_k} \cdot\frac{\theta_k^2}{\theta_{k-1}^2(1-\theta_{k})} = \frac{\mu_{k+1}}{\mu_k(1-\theta_{k})} \ge 1$.  %Since $R_1 = 1$ and $\rho_k \ge 1$, we have $R_{k+1} \ge 1, \; k=1,2,\dots.$  
Next, proceed as in the proof of Theorem~\ref{main.thm}.   First, 
\begin{align}\label{eq.ind.1.two}
\ip{z_{k+1}}{x_0} - \frac{\|z_{k+1}\|^2}{2\mu_{k+1}} &= \rho_k(1-\theta_k) \left(\ip{z_{k}}{x_0} - \frac{\|z_{k}\|^2}{2\mu_{k}}\right)
 +  \theta_k \cdot \ip{g_k}{x_0-\frac{z_k}{\mu_k}} -\frac{\theta_k^2}{2 \mu_{k+1}} \|g_k\|^2\notag\\
&= \rho_k(1-\theta_k) \left(\ip{z_{k}}{x_0} - \frac{\|z_{k}\|^2}{2\mu_{k}}\right)
 +  \theta_k \cdot \ip{g_k}{x_0-\frac{z_k}{\mu_k}} -\frac{t_k}{2} \|g_k\|^2.
\end{align}
Second, the convexity of $f^*$ and the fact that $f \ge \bar f = 0$ imply
\begin{align}\label{eq.ind.2.two}
-(R_{k+1} \cdot f)^*(z_{k+1}) &\ge -(1-\theta_k)(R_{k+1} \cdot f)^*(\rho_k\cdot z_k) - \theta_k (R_{k+1} \cdot f)^*(g_k) \notag \\
&\ge -(1-\theta_k)(\rho_k\cdot R_k \cdot f)^*(\rho_k\cdot z_k) - \theta_k \cdot f^*(g_k)
\\ & \ge
-\rho_k (1-\theta_k)(R_k\cdot f)^*(z_k)-\theta_k (\varphi^*(g^\varphi_k) + \psi^*(g^\psi_k)) \notag\\
&= -\rho_k(1-\theta_k)(R_k\cdot f)^*(z_k) - \theta_k \left(\ip{g^\varphi_k}{y_{k}} - \varphi(y_{k}) + \ip{g^\psi_k}{x_{k+1}} - \psi(x_{k+1})\right).\notag
\end{align}
The first step follows from the convexity of $f^*$.  The second step follows from~\eqref{eq.property.3}.  The third step follows from~\eqref{eq.property.1} and~\eqref{eq.property.2}.  The last step follows from~\eqref{eq.property.0} and $g^\varphi_k = \nabla \varphi(y_k),\; g^\psi_k \in \partial \psi(x_{k+1}).$ 

Let $\RHS_k$ denote the right-hand side in~\eqref{eq.thm.two}.  The induction hypothesis implies that $\RHS_k \ge f(x_k) \ge 0$.  Thus from \eqref{eq.ind.1.two}, \eqref{eq.ind.2.two}, and $\rho_k \ge 1$ it follows that
\begin{align}\label{eq.induction.two}
\RHS&_{k+1} - (1-\theta_k)\RHS_k \notag\\
& \ge \RHS_{k+1} - \rho_k(1-\theta_k)\RHS_k \\
& \ge \theta_k \left(\ip{g_{k}}{x_0-y_k-\frac{z_{k}}{\mu_k}} + \varphi(y_k)+\psi(x_{k+1}) + \ip{g^\psi_{k}}{y_k - x_{k+1}}\right)-\frac{t_k}{2} \|g_{k}\|^2.\notag
\end{align}
Finally, proceeding exactly as in case (b) in the proof of Theorem~\ref{main.thm} we get
\begin{align*}%\label{eq.finish}
f(x_{k+1}) &- (1-\theta_k) f(x_k) \\
&\le \theta_k\left(\varphi(y_k) + \psi(x_{k+1}) + \ip{g^\psi_k}{y_k-x_{k+1}}\right) 
+(1-\theta_k) \ip{g_k}{y_k-x_k} - \frac{t_k}{2} \|g_k\|^2
\\
&=
\theta_k \left(\ip{g_{k}}{x_0-y_k-\frac{z_{k}}{\mu_k}} + \varphi(y_k)+\psi(x_{k+1}) + \ip{g^\psi_{k}}{y_k - x_{k+1}}\right)-\frac{t_k}{2} \|g_{k}\|^2 \\
&\le \RHS_{k+1} - (1-\theta_k)\RHS_k.
\end{align*}
The second step follows from~\eqref{eq.kstep.crux}.  The third step follows from~\eqref{eq.induction.two}.   This completes the proof by induction.

\section{Proximal subgradient method}
Algorithm~\ref{algo.subgrad} describes a variant of Algorithm~\ref{algo.prox} for the case when $\varphi:\R^n \rightarrow \R$ is merely convex. 

\begin{algorithm}
\caption{Proximal subgradient method}\label{algo.subgrad}
\begin{algorithmic}[1]
	\STATE {\bf input:}  $x_0 \in \R^n$
	\FOR{$k=0,1,2,\dots$}
		\STATE pick $g^\varphi_k \in \partial \varphi(x_k)$ and $t_k > 0$
		\STATE $x_{k+1}:=\prox_{t_k}(x_k-t_kg^\varphi_k)$
	\ENDFOR
\end{algorithmic}
\end{algorithm}

When $\psi$ is the indicator function $I_C$ of a closed convex set $C$, Step 4 in Algorithm~\ref{algo.subgrad} can be rewritten as $x_{k+1} = \displaystyle\argmin_{x\in C} \|x_k - t_k \cdot g^\varphi_k - x\| = \Pi_C(x_k - t_k \cdot g^\varphi_k)$.  Hence when $\psi =I_C$ Algorithm~\ref{algo.subgrad} becomes the projected subgradient method for
\begin{equation}\label{eq.problem.const}
\min_{x\in C} \varphi(x).
\end{equation}
The classical convergence rate for the projected gradient is an immediate consequence of Proposition~\ref{prop.subgrad} as we detail below.   Proposition~\ref{prop.subgrad} in turn is obtained via a minor tweak on the construction and proof of Theorem~\ref{main.thm}.
Observe that 
\[
x_{k+1} = \prox_{t_k}(x_k - t_k g^\varphi_k) \Leftrightarrow x_{k+1} = x_k -t_k \cdot g_k 
\]
where $g_k = g^\varphi_k+g^\psi_k$ for some $g^\psi_k \in \partial \psi(x_{k+1}).$ 
Next,  let $z_k \in \R^n,
\; k = 0,1,2\dots$ be as follows
\begin{equation}\label{eq.sub.1}
z_{k} = \frac{\sum_{i=0}^{k} t_i g_i}{\sum_{i=0}^{k} t_i}.
\end{equation}

\begin{proposition}\label{prop.subgrad}
\label{thm.subgrad}  Let $x_k \in \R^n, \; k=0,1,2,\dots$ be the sequence of iterates generated by Algorithm~\ref{algo.subgrad} and let $z_k \in \R^n,
\; k = 0,1,2\dots$ be defined by~\eqref{eq.sub.1}.  Then for $k=0,1,2,\dots$
\begin{align}\label{eq.thm.subgrad}
\frac{ \sum_{i=0}^{k} t_i (\varphi(x_i)+\psi(x_{i+1})) -  \frac{1}{2}\sum_{i=0}^{k} t_i^2 \|g^\varphi_i\|^2}{\sum_{i=0}^{k} t_i}
&\le   -f^*(z_k)+\ip{z_k}{x_0} - \frac{\sum_{i=0}^{k} t_i}{2}\|z_k\|^2  \\
&\le \min_{u\in\R^n}\left\{f(u) + \frac{1}{2\sum_{i=0}^{k} t_i}\|u-x_0\|^2  \right\}.\notag
\end{align}
In particular,  
\[
\frac{ \sum_{i=0}^{k} t_i (\varphi(x_i)+\psi(x_{i+1})) - \frac{1}{2} \sum_{i=0}^{k} t_i^2 \|g^\varphi_i\|^2}{\sum_{i=0}^{k} t_i }
\le   f(x) +\frac{\|x_0- x\|^2}{2\sum_{i=0}^{k} t_i}
\]
for all $ x \in \R^n.$ 
\end{proposition}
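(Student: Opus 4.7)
The plan is to mimic verbatim the induction proof of Theorem~\ref{main.thm}(a), with one new twist needed to handle the fact that $g^\varphi_k$ is now only a subgradient at $x_k$, so the descent inequality~\eqref{eq.decrease} is no longer available. I would set $\mu_k := 1/\sum_{i=0}^k t_i$ and $\gamma_k := t_k \mu_k$, so that $\mu_k = (1-\gamma_k)\mu_{k-1}$ and $z_k = (1-\gamma_k)z_{k-1} + \gamma_k g_k$. Let $\LHS_k$ and $\RHS_k$ denote the two sides of the first inequality in~\eqref{eq.thm.subgrad}; the goal is to prove $\LHS_k \le \RHS_k$ by induction on $k$.

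For the base case $k=0$, the Fenchel--Young inequality combined with $g^\varphi_0 \in \partial \varphi(x_0)$ and $g^\psi_0 \in \partial \psi(x_1)$ gives $-f^*(z_0) \ge -\varphi^*(g^\varphi_0) - \psi^*(g^\psi_0) = \varphi(x_0) + \psi(x_1) - \ip{g^\varphi_0}{x_0} - \ip{g^\psi_0}{x_1}$.  Adding $\ip{g_0}{x_0} - \tfrac{t_0}{2}\|g_0\|^2$, using $x_1 = x_0 - t_0 g_0$, and invoking the elementary identity $t_0\ip{g^\psi_0}{g_0} - \tfrac{t_0}{2}\|g_0\|^2 = \tfrac{t_0}{2}\|g_0 - g^\varphi_0\|^2 - \tfrac{t_0}{2}\|g^\varphi_0\|^2 \ge -\tfrac{t_0}{2}\|g^\varphi_0\|^2$ produces $\RHS_0 \ge \varphi(x_0) + \psi(x_1) - \tfrac{t_0}{2}\|g^\varphi_0\|^2 = \LHS_0$.

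For the inductive step I would show $\RHS_k - (1-\gamma_k)\RHS_{k-1} \ge \LHS_k - (1-\gamma_k)\LHS_{k-1}$, which combined with the induction hypothesis closes the argument. Proceeding exactly as in the derivations of~\eqref{eq.ind.1} and~\eqref{eq.ind.2} (after the index shift $k+1 \to k$, noting $\gamma_k/((1-\gamma_k)\mu_{k-1}) = t_k$) yields the lower bound $\gamma_k\bigl(\ip{g_k}{x_0 - z_{k-1}/\mu_{k-1}} + \varphi(x_k) + \psi(x_{k+1}) - \ip{g^\varphi_k}{x_k} - \ip{g^\psi_k}{x_{k+1}} - \tfrac{t_k}{2}\|g_k\|^2\bigr)$ for $\RHS_k - (1-\gamma_k)\RHS_{k-1}$. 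Unrolling the recursion $x_{i+1} = x_i - t_i g_i$ yields the analogue of~\eqref{eq.kstep.crux}, namely $x_0 - z_{k-1}/\mu_{k-1} = x_k$, which collapses the first three inner-product terms to $\ip{g^\psi_k}{x_k - x_{k+1}} = t_k \ip{g^\psi_k}{g_k}$. Reusing the base-case completion-of-squares identity then lower-bounds the entire display by $\gamma_k\bigl(\varphi(x_k) + \psi(x_{k+1}) - \tfrac{t_k}{2}\|g^\varphi_k\|^2\bigr)$, and a direct telescoping computation shows this equals $\LHS_k - (1-\gamma_k)\LHS_{k-1}$ exactly.

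The second inequality in~\eqref{eq.thm.subgrad} follows by combining Fenchel--Young, $-f^*(z_k) \le f(u) - \ip{z_k}{u}$, with the Young-type inequality $\ip{z_k}{x_0 - u} - \tfrac{1}{2\mu_k}\|z_k\|^2 \le \tfrac{\mu_k}{2}\|u - x_0\|^2$ and minimizing over $u$; the last displayed inequality in the proposition is just this specialized to $u = x$. The main obstacle is the completion-of-squares trick that converts the unwanted $\tfrac{t_k}{2}\|g_k\|^2$ term into the subgradient error budget $\tfrac{t_k}{2}\|g^\varphi_k\|^2$ appearing in $\LHS_k$: this is precisely what replaces the descent inequality~\eqref{eq.decrease} used in Theorem~\ref{main.thm}(a) and is the only substantive departure from that proof.
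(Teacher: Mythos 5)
Your proposal is correct and follows essentially the same route as the paper's proof: the same base case, the same inductive comparison of $\RHS_k-(1-\gamma_k)\RHS_{k-1}$ against $\LHS_k-(1-\gamma_k)\LHS_{k-1}$ modeled on Theorem~\ref{main.thm}(a), and the same key substitution of the completion-of-squares identity $t_k\ip{g^\psi_k}{g_k}-\tfrac{t_k}{2}\|g_k\|^2=\tfrac{t_k}{2}\|g^\psi_k\|^2-\tfrac{t_k}{2}\|g^\varphi_k\|^2\ge-\tfrac{t_k}{2}\|g^\varphi_k\|^2$ for the unavailable descent condition~\eqref{eq.decrease}. The only cosmetic difference is an index shift and that you run the base-case chain from $\RHS_0$ down to $\LHS_0$ rather than the reverse.
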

\begin{proof}
Let $\LHS_k$ and $\RHS_k$ denote respectively the left-hand and right-hand sides in~\eqref{eq.thm.subgrad}.  We proceed by induction.  
For $k=0$ we have
\begin{align*}
\LHS_0 &= \varphi(x_0)+\psi(x_1) - \frac{t_0\|g^\varphi_0\|^2}{2} 
\\&= -\varphi^*(g^\varphi_0) + \ip{g^\varphi_0}{x_0} - \psi^*(g^\psi_0) + \ip{g^\psi_0}{x_1} - \frac{t_0\|g^\varphi_0\|^2}{2} 
%\\&= -\varphi^*(g^\varphi_0) -  \psi^*(g^\psi_0) + \ip{g_0}{x_0} + \ip{g^\psi_0}{x_1-x_0} - \frac{t_0\|g^\varphi_0\|^2}{2}
%\\&\le -f^*(g_0) + \ip{g_0}{x_0} - t_0\|g^\psi_0\|^2 - t_0\ip{g^\psi_0}{g^\varphi_0} - \frac{t_0\|g^\varphi_0\|^2}{2}
\\&\le -f^*(g_0) + \ip{g_0}{x_0} - \frac{t_0\|g_0\|^2}{2}
%\\&\le -\varphi^*(z^\varphi_0) +  \psi^*(z^\psi_0) + \ip{z^\varphi_0+z^\psi_0}{x_0} - \frac{\|z^\varphi_0+z^\psi_0\|^2}{2}
\\&= \RHS_0.
\end{align*}
The second step follows from~\eqref{eq.property.0} and $g^\varphi_0 \in \partial \varphi(x_0),\; g^\psi_0 \in \partial \psi(x_1)$.  The third step follows from~\eqref{eq.property.1} and $g_0 = g^\varphi_0+ g^\psi_0, \; x_1 = x_0 - t_0 \cdot g_0$.

Next we show the main inductive step $k$ to $k+1$.  
Observe that  $z_{k+1} = (1-\gamma_k) z_{k} + \gamma_k g_{k+1}$ for $k=0,1,\dots$
where $\gamma_k = \frac{t_{k+1}}{\sum_{i=0}^{k+1} t_i} \in (0,1)$.  
Proceeding exactly as in the proof of Theorem~\ref{main.thm} we get
\begin{align*}
\RHS_{k+1} - (1-\gamma_k)\RHS_k &\ge \gamma_k\left(\varphi(x_{k+1}) + \psi(x_{k+2}) + \ip{g^\psi_{k+1}}{x_{k+1}-x_{k+2}} - \frac{t_{k+1}\|g_{k+1}\|^2}{2} \right) \\
&= \gamma_k\left(\varphi(x_{k+1}) + \psi(x_{k+2}) + \frac{t_{k+1}\|g^\psi_{k+1}\|^2}{2} - \frac{t_{k+1}\|g^\varphi_{k+1}\|^2}{2} \right).
\end{align*}
The second step follows because $g_{k+1} = g^\varphi_{k+1}+ g^\psi_{k+1}$ and $x_{k+2} = x_{k+1} - t_{k+1} \cdot g_{k+1}.$
The proof is thus completed by observing that
\begin{align*}
\LHS_{k+1} - (1-\gamma_k)\LHS_k &= \gamma_k\left(\varphi(x_{k+1})+\psi(x_{k+2}) - \frac{t_{k+1}\|g^\varphi_{k+1}\|^2}{2}\right) 
\\&\le 
\gamma_k\left(\varphi(x_{k+1})+\psi(x_{k+2}) + \frac{t_{k+1}\|g^\psi_{k+1}\|^2}{2} - \frac{t_{k+1}\|g^\varphi_{k+1}\|^2}{2}\right).
\end{align*}

\end{proof}

Let $C \subseteq \R^n$ be a nonempty closed convex set and $\psi = I_C$.  As noted above, in this case Algorithm~\ref{algo.subgrad} becomes the projected subgradient algorithm for problem \eqref{eq.problem.const}.   %As we explain next, in this case 
We next show that in this case Proposition~\ref{prop.subgrad} yields the classical convergence rates~\eqref{eq.classic.1} and~\eqref{eq.classic.2}, as well and the modern and more general one~\eqref{eq.modern}  recently established  by Grimmer~\cite[Theorem 5]{Grim17}. 
% for the projected gradient algorithm.

Suppose $\bar \varphi = \dmin_{x\in C} \varphi(x)$ is finite and $\bar X:= \{x\in C: \varphi(x) = \bar \varphi\}$ is nonempty.  From Proposition~\ref{prop.subgrad} it follows that
\begin{equation}\label{eq.subgrad.rate}
\sum_{i=0}^k t_i(\varphi(x_i) - \bar \varphi) \le \frac{\sum_{i=0}^k t_i^2 \|g^\varphi_i\|^2 + \dist(x_0,\bar X)^2}{2}. 
\end{equation}
In particular, if $\|g\| \le L$ for all $x\in C$ and $g\in \partial \varphi(x)$ then~\eqref{eq.subgrad.rate} implies
\begin{equation}\label{eq.classic.1}
\min_{i=0,\dots,k}(\varphi(x_i) - \bar \varphi) \le \frac{\sum_{i=0}^k t_i^2 L^2 + \dist(x_0,\bar X)^2}{2\sum_{i=0}^k t_i}. 
\end{equation}
Let $\alpha_i := t_i \|g^\varphi_i\|, \; i=0,1,\dots.$  Then Step 4 in Algorithm~\ref{algo.subgrad} can be rewritten as $x_{k+1} = \Pi_C\left(x_k - \alpha_k \cdot \frac{g^\varphi_k}{\|g^\varphi_k\|}\right)$ provided $\|g^\varphi_k\| > 0,$ which occurs as long as $x_k$ is not an optimal solution to \eqref{eq.problem.const}.   If $\|g^\varphi_i\| > 0$ for $i=0,1,\dots,k$ then~\eqref{eq.subgrad.rate} implies
\begin{equation}\label{eq.classic.2}
\min_{i=0,\dots,k}(\varphi(x_i) - \bar \varphi) \le L \cdot \frac{\sum_{i=0}^k \alpha_i^2 + \dist(x_0,\bar X)^2}{2\sum_{i=0}^k \alpha_i}. 
\end{equation}

Let $\mathcal L: \R_+ \rightarrow \R_+$. Following Grimmer~\cite{Grim17}, the subgradient oracle for $\varphi$ is $\mathcal L$-steep on $C$ if for all $x\in C$ and $g\in \partial \varphi(x)$
\[
\|g\| \le \mathcal L(\varphi(x) - \bar \varphi).
\]
As discussed by Grimmer~\cite{Grim17}, $\mathcal L$-steepness is a more general and weaker condition than the traditional bound $\|g\| \le L$ for all $x\in C$ and $g\in \partial \varphi(x)$.  Indeed, the latter bound is precisely $\mathcal L$-steepness for the constant function $\mathcal L(t) = L$ and holds when $\varphi$ is $L$-Lipschitz on $C$.

Suppose the subgradient oracle for $\varphi$ is $\mathcal L$-steep for some $\mathcal L: \R_+ \rightarrow \R_+$.  If $\alpha_i:= t_i \|g^\varphi_i\| > 0$ for $i=0,1,\dots,k$ then~\eqref{eq.subgrad.rate} implies
\[
\sum_{i=0}^k\alpha_i \cdot \frac{\varphi(x_i) - \bar \varphi}{\mathcal L(\varphi(x_i) - \bar \varphi)} \le \frac{\sum_{i=0}^k \alpha_i^2 + \dist(x_0,\bar X)^2}{2},
\]
and thus
\begin{equation}\label{eq.modern}
\min_{i=0,\dots,k}(\varphi(x_i) - \bar \varphi) \le \sup\left\{t: 
\frac{t}{\mathcal L(t)} \le \frac{\sum_{i=0}^k \alpha_i^2 + \dist(x_0,\bar X)^2}{2\sum_{i=0}^k \alpha_i}  \right\}.
\end{equation}

\section*{Acknowledgements}

This research has been  funded by NSF grant CMMI-1534850.

\bibliographystyle{plain}
%\bibliography{ConjAccelRefs}

\end{document}